\documentclass[12pt,a4paper]{amsart}
\usepackage{amsmath,amssymb,amsthm}
\usepackage{enumitem}
\usepackage{ifthen,verbatim}
\usepackage{mathrsfs}
\usepackage{color}
\usepackage{url}
\usepackage[english]{babel}
\usepackage{here}
\usepackage[T1]{fontenc}
\usepackage{floatflt,graphicx,graphics}
\usepackage{a4wide}
\usepackage{cite}
\usepackage{ifthen}

\newtheorem{thr}{Theorem}[section]
\newtheorem{lem}{Lemma}[section]
\newtheorem{rem}{Remark}[section]
\newtheorem{conjec}{Conjecture}[section]

\setlength{\textwidth}{17cm}
\setlength{\textheight}{21.5cm}
\setlength{\oddsidemargin}{0cm}
\nonstopmode
\setlength{\evensidemargin}{0cm}
\setlength{\footskip}{40pt}

\begin{document}
\title[Asymptotics of the exterior conformal modulus]{Asymptotics of the exterior conformal modulus\\ of a symmetric quadrilateral under stretching map}

\author{A.~Dyutin}
\address{Kazan Federal University, Kremlyovskaya str. 35, Tatarstan, 420008,
Russian Federation}
\email{dyutin.andrei2016@yandex.ru}

\author{Giang~V.~Nguyen}
\address{Kazan Federal University, Kremlyovskaya str. 35, Tatarstan, 420008,
Russian Federation}
\email{nvgiang.math@gmail.com}

\begin{abstract}
In this work, we study the distortion of the exterior conformal modulus of a symmetric quadrilateral, when stretched in the direction of the abscissa axis with the coefficient $H\to \infty$. By using some facts from the theory of elliptic integrals, we confirm that the asymptotic behavior of this modulus does not depend on the shape of the boundary of the quadrilateral; moreover, it is equivalent to $(1/\pi)\log H$ as $H\to \infty$.
\end{abstract}

\keywords{\it quadrilateral, conformal modulus, exterior conformal modulus, quasiconformal mapping, convergence of domains to a kernel.}

\maketitle

\section{Introduction}
In the geometric function theory and its applications, an important problem is the study of conformal moduli of quadrilaterals and their distortions under quasiconformal mappings (see, e.g.,
\cite[Chap.~3, Sect.~11]{kuhnau2005conformal}, \cite{papamichael2010numerical}).

A quadrilateral $(Q; z_1, z_2, z_3, z_4)$ is a Jordan domain $Q$ on the Riemann sphere, on the boundary of which four different points (vertices) $z_1$, $z_2$, $z_3$, and $z_4$ are fixed;
they are located in such a way that the index increases with the positive bypass of the boundary.

The conformal modulus $\operatorname{Mod}(\mbox{\boldmath$Q$})$ of a given quadrilateral $\mbox{\boldmath$Q$}= (Q; z_1, z_2, z_3, z_4)$ is the extremal length $\lambda(\Gamma)$ of the family of
curves in the domain $Q$, connecting its boundary arcs $ \gamma_1 = (z_1, z_2)$ and $\gamma_2 = (z_3, z_4)$ (for extremal lengths, see, e.g., \cite[Chap.~1, Sect.~D]{ahlfors2006lectures}). Such the
modulus $\operatorname{Mod}(\mbox{\boldmath$Q$})$ is also known as the interior conformal modulus of $\mbox{\boldmath$Q$}$.

The conformal modulus of a quadrilateral is invariant under conformal transformations. It is also quasiinvariant under quasiconformal mappings (see, e.g., \cite[Chap.~2,
Sect.~A]{ahlfors2006lectures}, \cite[Chap.~3, Sect.~22]{kuhnau2005conformal}): If $f_{\mathcal{K}}$ is an $\mathcal{K}$-quasiconformal mapping, then
\begin{align*}
\dfrac{1}{\mathcal{K}}\operatorname{Mod}(\mbox{\boldmath$Q$})\leq \operatorname{Mod}(f_{\mathcal{K}}(\mbox{\boldmath$Q$}))\leq \mathcal{K} \operatorname{Mod}(\mbox{\boldmath$Q$})
\end{align*}
where $\mbox{\boldmath$Q$}$ is some given quadrilateral.

By the Riemann mapping theorem, there exists a conformal mapping of the domain $Q$ onto the rectangle $\Pi = \{w\,:\, 0 <\operatorname{Re} w < 1, 0 < \operatorname{Im} w < m \}$ translating the
arcs $\gamma_1$ and $\gamma_2$ to the horizontal sides of $\Pi$. Then, the conformal modulus $\operatorname{Mod}(\mbox{\boldmath$Q$})$ of the quadrilateral $\mbox{\boldmath$Q$}$ is equal to $m$.

Finally, we can determine the conformal modulus by using the Dirichlet integral (see, e.g. \cite{dubinin2014condenser}). For a given quadrilateral $\mbox{\boldmath$Q$}=(Q; z_1, z_2, z_3, z_4)$, let
$\mathcal{L}$ be the class of real-valued functions $u$ which are continuous on $\overline{Q}$ and belong to the Sobolev space $W_1(Q) $ with boundary values $u = 0$ and $u = 1$ on $(z_1, z_2)$ and $(z_3, z_4)$, respectively. Then
$$
\left({\operatorname{Mod}(\mbox{\boldmath$Q$})}\right)^{-1}=\min\limits_{u\in\mathcal{L}}D_{Q}[u],
$$
where
$$
D_{Q}[u]=\int\limits_Q|\operatorname{grad} u|dxdy=\int\limits_Q\left[\left(\frac{\partial u}{\partial x}\right)^2+\left(\frac{\partial u}{\partial y}\right)^2\right]dxdy
$$
is the Dirichlet integral of the function $u$ over the domain $Q$.

\begin{rem}
The conformal modulus of a quadrilateral $\mbox{\boldmath$Q$}= (Q; z_1, z_2, z_3, z_4)$ can also be defined when $Q$ is not a Jordan domain. For example, if $Q$ is a simply connected domain with a
nondegenerate boundary, and $z_1$, $z_2$, $z_3$, $z_4$ are four different boundary prime ends of $Q$ (see, e.g., \cite[Chap.~2, Sect.~3]{goluzin1969geometric}), then the conformal modulus can be
defined in the same way.
\end{rem}

We need to note the well-known monotonicity property of the conformal modulus (see, e.g., \cite[Chap.~2, Sect.~3]{papamichael2010numerical}).

\begin{lem}\label{monot}
If we change the domain $Q$ by keeping the boundary arcs, $(z_1,z_2)$ and $(z_3,z_4)$, and the vertices of the quadrilateral unchanged, while pushing $(z_2,z_3)$ and $(z_4,z_1)$ into the domain $Q$,
then the
conformal modulus of the quadrilateral increases. Under the reverse transformation, the conformal modulus, naturally, decreases.
\end{lem}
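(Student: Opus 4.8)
The plan is to read the claim off directly from the definition of the modulus as an extremal length, using only the monotonicity of extremal length under inclusion of curve families (see \cite[Chap.~1, Sect.~D]{ahlfors2006lectures}). Let $Q'$ be the domain obtained from $Q$ by pushing the arcs $(z_2,z_3)$ and $(z_4,z_1)$ into $Q$; then $Q'\subseteq Q$, while $\gamma_1=(z_1,z_2)$, $\gamma_2=(z_3,z_4)$ and the four vertices are unchanged and still lie on $\partial Q'$ in the correct cyclic order, so that $(Q';z_1,z_2,z_3,z_4)$ is again a quadrilateral.

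The one real step is the observation that the family $\Gamma'$ of curves lying in $\overline{Q'}$ and joining $\gamma_1$ to $\gamma_2$ is contained in the corresponding family $\Gamma$ for $(Q;z_1,z_2,z_3,z_4)$: since $Q'\subseteq Q$ and the two distinguished arcs are the same, every curve admissible for the new quadrilateral is admissible for the old one. Consequently, for every conformal metric $\rho$ one has $L(\Gamma',\rho)\ge L(\Gamma,\rho)$ (an infimum taken over a smaller set of curves), hence $\lambda(\Gamma')\ge\lambda(\Gamma)$, i.e. $\operatorname{Mod}(Q';z_1,z_2,z_3,z_4)\ge\operatorname{Mod}(Q;z_1,z_2,z_3,z_4)$. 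This is the first assertion, and the reverse one is then immediate: if $Q''\supseteq Q$ is obtained by pushing $(z_2,z_3)$ and $(z_4,z_1)$ outwards, then $Q$ arises from $Q''$ by pushing these same arcs in, whence the modulus of $(Q'';z_1,z_2,z_3,z_4)$ does not exceed that of $(Q;z_1,z_2,z_3,z_4)$.

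An equivalent route uses the Dirichlet-integral characterization recalled above: every $u$ in the class $\mathcal{L}$ of the quadrilateral on $Q$ restricts to a function $u|_{Q'}$ in the class $\mathcal{L}'$ of the quadrilateral on $Q'$ (the boundary values on $\gamma_1$ and $\gamma_2$ are retained, and $u\in W_1(Q)$ restricts to $W_1(Q')$), and $D_{Q'}[u|_{Q'}]\le D_Q[u]$ because the integrand is nonnegative and $Q'\subseteq Q$; passing to the infimum over $u$ gives $\bigl(\operatorname{Mod}(Q';z_1,z_2,z_3,z_4)\bigr)^{-1}\le\bigl(\operatorname{Mod}(Q;z_1,z_2,z_3,z_4)\bigr)^{-1}$, the same conclusion.

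In either approach the analytic part is essentially trivial; the point that genuinely needs care is the topological bookkeeping hidden in the phrase ``pushing the arcs into the domain'' — one must be sure that after the deformation $\gamma_1$ and $\gamma_2$ remain accessible boundary arcs of the new domain with the four vertices in the correct cyclic order, so that the new quadrilateral is legitimate and the families $\Gamma'$, $\mathcal{L}'$ are well defined. Once the inclusion $Q'\subseteq Q$ with the two distinguished arcs unchanged has been set up correctly, the monotonicity of the modulus follows at once from the monotonicity of extremal length (or of the Dirichlet integral) under passage to a subdomain.
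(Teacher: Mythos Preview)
Your argument is correct. Both the extremal-length route and the Dirichlet-integral route are standard and sound; the inclusion $\Gamma'\subset\Gamma$ (or $\mathcal{L}|_{Q'}\subset\mathcal{L}'$) is exactly the content of the lemma, and your discussion of the topological bookkeeping is appropriate.

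Note, however, that the paper does not actually give a proof of this lemma at all: it is stated as a well-known monotonicity property and referred to \cite[Chap.~2, Sect.~3]{papamichael2010numerical}. So there is no ``paper's own proof'' to compare against; you have simply supplied the standard justification that the paper omits.
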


If the domain $Q$ does not contain a point at infinity in its closure, then along with the coformal modulus of the quadrilateral $\mbox{\boldmath$Q$}= (Q; z_1, z_2, z_3, z_4)$, we can consider the so-called exterior conformal modulus of this quadrilateral. Denote by $Q^c=\overline{\mathbb{C}}\setminus Q$ the complement of $Q$ in the extended complex plane. Then $\infty\in Q^c$ and the conformal modulus of the quadrilateral $\mbox{\boldmath$Q$}^c=(Q^c; z_4, z_3, z_2, z_1)$ is called the exterior conformal modulus of $\mbox{\boldmath$Q$}$. We will denote this conformal modulus by $\operatorname{ExtMod}(\mbox{\boldmath$Q$})$.

In 1993, P.~Duren and J.~Pfaltzgraff \cite{duren1993robin} actually found a formula connecting the interior and exterior conformal moduli for the the case of the rectangle $\Pi_{ab}:=[-a/2,a/2]\times[0, b]$, $a$, $b\in (0, \infty)$, whose vertices coincide with the natural ones of $\Pi_{ab}$, i.e. $z_1=-a/2+ib$, $z_2=-a/2$, $z_3=a/2$, $z_4=a/2+ib$. They proved that if $k$, $0<k<1$, is such that
\begin{equation}\label{(2)}
\operatorname{ExtMod}(\Pi_{ab}) = \frac{2 K(k)}{K'(k)},
\end{equation}
then the interior conformal modulus of $\Pi_{ab}$, i.e. the aspect ratio $a/b$ is equal to
\begin{equation}\label{(1)}
\operatorname{Mod}(\Pi_{ab})=\Psi(k):=\frac{2\left[E(k) - (1 - k) K(k)\right]}{E'(k) - k K'(k)},
 \end{equation}
where $K(k)$ and $E(k)$ are the complete elliptic integrals of the first and second kinds; $K'(k)=K(k')$, $E'(k)=E(k ')$, $k':=\sqrt{1-k^2}$ (see Section~\ref{ell} below for details).

We note that earlier, in 1932, W.~Bickley \cite{bickley} also obtained similar formulas from which we can deduce the indicated result of P.~Duren and J.~Pfaltzgraff (see Section~\ref{sec_bickley}
below for details).

Using the results of \cite{duren1993robin}, M.~Vuorinen and H.~Zhang \cite[Theorem 4.3]{vuorinen2013exterior} established upper and lower bounds for the exterior conformal modulus of the rectangle
$\Pi_{ab}$. In particular, they proved the following inequality
\begin{equation}\label{extm}
\frac{2}{\pi}\,\left(1-(1+\sqrt{4H/\pi})^{-1}\right)\log\left(2(1+\sqrt{4H/\pi})\right)<\operatorname{ExtMod}(\Pi_{ab})<\frac{2}{\pi}\,\log\left(2(1+\sqrt{\pi H})\right),
\end{equation}
where $H=a/b=\operatorname{Mod}(\Pi_{ab})$. As remarked in \cite{vuorinen2013exterior}, the exterior conformal modulus $\operatorname{ExtMod}(\Pi_{ab})$ has the logarithmic growth with respect to $H$.
Furthermore, the following result is deduced from~\eqref{extm}.

\begin{thr}\label{extmodr} Let  $\Pi_{ab}:=[-a/2,a/2]\times[0,b]$ and $H=a/b$. Then
\begin{equation}\label{extab}
 \operatorname{ExtMod}(\Pi_{ab})\sim \frac{1}{\pi}\,\log H, \quad\text{ as }\quad H\to\infty.
\end{equation}
\end{thr}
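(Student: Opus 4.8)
The plan is to obtain \eqref{extab} as a direct consequence of the two-sided estimate \eqref{extm} via a squeeze argument; no new geometric construction is required. Denote by $L(H)$ and $U(H)$ the left-hand and right-hand sides of \eqref{extm}, so that $L(H)<\operatorname{ExtMod}(\Pi_{ab})<U(H)$ holds for every $H=a/b$. It then suffices to prove that $L(H)\big/\left(\tfrac1\pi\log H\right)\to 1$ and $U(H)\big/\left(\tfrac1\pi\log H\right)\to 1$ as $H\to\infty$; dividing \eqref{extm} through by $\tfrac1\pi\log H>0$ (which is positive once $H>1$) and applying the squeeze theorem will then give exactly \eqref{extab}.

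First I would treat the right-hand side $U(H)=\tfrac2\pi\log\bigl(2(1+\sqrt{\pi H})\bigr)$. Writing $1+\sqrt{\pi H}=\sqrt{\pi H}\,\bigl(1+(\pi H)^{-1/2}\bigr)$ gives
\[
\log\bigl(2(1+\sqrt{\pi H})\bigr)=\tfrac12\log H+\tfrac12\log\pi+\log 2+o(1)\sim\tfrac12\log H,
\]
so $U(H)\sim\tfrac1\pi\log H$. Next, for the lower bound $L(H)=\tfrac2\pi\bigl(1-(1+\sqrt{4H/\pi})^{-1}\bigr)\log\bigl(2(1+\sqrt{4H/\pi})\bigr)$, the prefactor tends to $1$ because $(1+\sqrt{4H/\pi})^{-1}\to 0$, while the same computation as above (with $\pi H$ replaced by $4H/\pi$) yields $\log\bigl(2(1+\sqrt{4H/\pi})\bigr)\sim\tfrac12\log H$. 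Hence $L(H)\sim\tfrac2\pi\cdot 1\cdot\tfrac12\log H=\tfrac1\pi\log H$ as well.

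The argument is essentially bookkeeping, so I do not anticipate a genuine obstacle; the only point deserving a moment's care is that $L(H)$ is a product of two factors with different asymptotic behaviour — one converging to the constant $1$, the other growing like $\tfrac12\log H$ — and one must confirm that the convergent factor does not affect the leading term. Since a factor of the form $1+o(1)$ multiplying $\tfrac12\log H\,(1+o(1))$ is again $\tfrac12\log H\,(1+o(1))$, this causes no difficulty, and the squeeze then closes the proof of \eqref{extab}.
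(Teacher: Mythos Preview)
Your proof is correct and follows exactly the approach indicated in the paper, which states that Theorem~\ref{extmodr} ``is deduced from~\eqref{extm}'' --- i.e., precisely the squeeze argument you carry out. The paper also supplies in Section~\ref{sec_bickley} an alternative derivation via the asymptotics $\Psi(k)\sim 16/(\pi(1-k)^2)$ and $K(k)\sim\log(4/k')$ of the elliptic integrals, but your route through \eqref{extm} is the one the authors themselves point to first.
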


The exterior conformal moduli for quadrilaterals of a sufficiently arbitrary shape and their properties have been studied in \cite{nguyen2022}, in connection with solving the Vuorinen problem on the
asymptotic behavior of the conformal modulus of an unbounded doubly connected domain under its infinite stretch. Also in the article by S.~Nasyrov, T.~Sugawa and M.~Vuorinen
\cite{nasyrov2021moduli} the exterior conformal moduli of quadrilaterals, in particular isosceles trapezoids, have been investigated.

Let us formulate the Vuorinen problem on the asymptotic of conformal moduli of doubly connected domains. First, we recall one of the possible definitions of the conformal modulus of a doubly
connected domain $\Omega$ with non-degenerate components. If $\Omega$ is conformally equivalent to an annulus $\{z\in\mathbb{C}\,:\, 1<|z|<q\}$ (see, e.g., \cite[Chap.~V, Sect.~1]{goluzin1969geometric}, \cite[Sect.~3]{kuhnau2005conformal}), then its conformal modulus is equal to $(1/2\pi)\log q$. Vuorinen's problem is posed as follows.

Let $\Omega$ be a doubly connected domain in the plane and $f_H: x+iy\mapsto Hx+iy$ be an $H$-quasiconformal mapping of the complex plane with the coefficient $H>0$. Denote by $\Omega_H$ the
image of the domain $\Omega$ under this map. M.~Vuorinen asked: \textquotedblleft {\it What is the behavior of the conformal modulus of $\Omega_H$ as $H\to \infty$\,?} \textquotedblright. Regarding
this problem, it becomes necessary to describe the asymptotic behavior of the interior and exterior conformal moduli of quadrilaterals, which are obtained from a given quadrilateral under
stretching map $f_H$ as $H\to \infty$.

Vuorinen's problem have been studied for the cases of rectangular frames \cite{nasyrov2015riemann}, for bounded doubly connected domains \cite{dautova2018, dautova2019}, and also for unbounded ones
\cite{nvgiang2021, nguyen2022}. Especially, in \cite{dautova2019, nvgiang2021, nguyen2022} a formula has been obtained describing the asymptotic behavior of the interior conformal modulus of a quadrilateral. Let the functions $f$ and $g$ be continuous on the segment $[a,b]$, $-\infty< a<b<+\infty$, and $f(x)<g(x)$ for all $x\in[a,b]$. Consider a quadrilateral $\mbox{\boldmath$Q$}= (Q; z_1, z_2, z_3, z_4)$, where $Q$ is bounded by two vertical segments $[z_1,z_2]$ and $[z_3,z_4]$ ending at the points $z_1=a+ig(a) $, $z_2=a+if(a)$, $z_3=b+if(b)$, $z_4=b+ig(b)$, and two curves
$$
\Gamma_1 = \{x + iy\,:\,y = f(x),\;a \leq x \leq b\},\quad \Gamma_2 = \{x + iy\,:\,y = g(x),\;a \leq x \leq b\}.
$$
The class of such quadrilaterals will be denoted by $\mathfrak{S}$. Denote by $\mbox{\boldmath$Q$}_H$ the quadrilateral which is the image of $\mbox{\boldmath$Q$}$ under the mapping~$f_H$. The
following result is proved in \cite[Theorem.~4.2]{nvgiang2021}.
\begin{thr}\label{inner}
Let the quadrilateral $\mbox{\boldmath$Q$}\in \mathfrak{S}$. Then the following asymptotic formula holds:
$$
 \operatorname{Mod}(\mbox{\boldmath$Q$}_H)\sim \frac{1}{c H},\ \ H\to\infty, \quad \text{where} \quad c=\int\limits_a^b\frac{dx}{g(x)-f(x)}.
$$
\end{thr}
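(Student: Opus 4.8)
The plan is to exhibit $\mathbf{Q}_H$ as the image, under a quasiconformal map whose maximal dilatation tends to $1$ as $H\to\infty$, of a Euclidean rectangle of conformal modulus exactly $1/(cH)$, and then to invoke the quasi-invariance of the conformal modulus recalled in the Introduction. With $f_H(x+iy)=Hx+iy$ the stretched domain is $Q_H=\{(x,y):\,Ha<x<Hb,\ f(x/H)<y<g(x/H)\}$. I would put $\omega_H(x)=g(x/H)-f(x/H)$, which is bounded below by $\delta:=\min_{[a,b]}(g-f)>0$, and introduce the straightening map
\[
\Phi_H(x,y)=\Bigl(\ \int_{Ha}^{x}\frac{ds}{\omega_H(s)}\ ,\ \frac{y-f(x/H)}{\omega_H(x)}\ \Bigr),
\]
a homeomorphism of $\overline{Q_H}$ onto $\overline{R_H}$, where $R_H=(0,\Lambda_H)\times(0,1)$ and $\Lambda_H=\int_{Ha}^{Hb}\frac{ds}{\omega_H(s)}=H\int_a^b\frac{dx}{g(x)-f(x)}=cH$. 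This map sends the curved arc $\{y=f(x/H)\}$ of $\partial Q_H$ to the side $\{\eta=0\}$ of $R_H$, the curved arc $\{y=g(x/H)\}$ to $\{\eta=1\}$, and the two vertical segments to the vertical sides of $R_H$, so it respects the quadrilateral structure. Since the conformal modulus of $R_H$ with its two horizontal sides distinguished equals $1/\Lambda_H=1/(cH)$, the statement reduces to showing that $\Phi_H$ (equivalently $\Phi_H^{-1}$) is $K_H$-quasiconformal with $K_H=1+o(1)$.

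I would first treat the case $f,g\in C^1[a,b]$, where $\Phi_H$ is a diffeomorphism with lower-triangular Jacobian
\[
D\Phi_H=\begin{pmatrix}\omega_H(x)^{-1}& 0\\ \partial_x\eta& \omega_H(x)^{-1}\end{pmatrix},\qquad
\partial_x\eta=-\frac{1}{H\,\omega_H(x)}\Bigl[f'(x/H)+\eta\,(g'-f')(x/H)\Bigr].
\]
Because $\delta\le\omega_H\le\|g-f\|_\infty$, $0\le\eta\le1$ and $f',g'$ are bounded, one gets $|\omega_H(x)\,\partial_x\eta|=O(1/H)$ uniformly on $Q_H$, hence $D\Phi_H=\omega_H(x)^{-1}(I+N_H)$ with $\|N_H\|=O(1/H)$; thus $\Phi_H$ is sense-preserving and its dilatation is $1+O(1/H)$. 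The quasi-invariance inequality then gives $\operatorname{Mod}(\mathbf{Q}_H)=(1+O(1/H))/(cH)\sim 1/(cH)$. An equivalent route is through the Dirichlet-integral characterization: the competitor $u_H(x,y)=(y-f(x/H))/\omega_H(x)$ has $D_{Q_H}[u_H]=cH+O(1/H)$, whereas for every admissible $u$ the Cauchy--Schwarz inequality applied to each vertical slice of $Q_H$ gives $D_{Q_H}[u]\ge\int_{Ha}^{Hb}\frac{ds}{\omega_H(s)}=cH$; this last estimate needs no regularity of $f,g$, so already $\operatorname{Mod}(\mathbf{Q}_H)\le 1/(cH)$ for every continuous $f<g$.

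It remains to drop the smoothness hypothesis. Given $\varepsilon>0$, I would pick $C^1$ functions with $f\le f_\varepsilon<g_\varepsilon\le g$ and $f^\varepsilon\le f<g\le g^\varepsilon$, each within $\varepsilon$ of $f$ or $g$ in the sup norm (possible since $g-f\ge\delta$), and let $\mathbf{Q}_\varepsilon,\mathbf{Q}^\varepsilon\in\mathfrak{S}$ be the corresponding quadrilaterals, so $Q_\varepsilon\subset Q\subset Q^\varepsilon$. Moving the two distinguished arcs closer together (respectively, further apart) does not increase (respectively, does not decrease) the conformal modulus, by the overflowing principle for extremal lengths (cf.\ the monotonicity in Lemma~\ref{monot}); this relation is preserved by $f_H$, so $\operatorname{Mod}(\mathbf{Q}_{\varepsilon,H})\le\operatorname{Mod}(\mathbf{Q}_H)\le\operatorname{Mod}(\mathbf{Q}^\varepsilon_H)$ for every $H$, where $\mathbf{Q}_{\varepsilon,H}=f_H(\mathbf{Q}_\varepsilon)$ and $\mathbf{Q}^\varepsilon_H=f_H(\mathbf{Q}^\varepsilon)$. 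Using the $C^1$ case and letting $H\to\infty$,
\[
1/c_\varepsilon\le\liminf_{H\to\infty}H\operatorname{Mod}(\mathbf{Q}_H)\le\limsup_{H\to\infty}H\operatorname{Mod}(\mathbf{Q}_H)\le 1/c^\varepsilon,
\]
with $c_\varepsilon=\int_a^b\frac{dx}{g_\varepsilon-f_\varepsilon}$ and $c^\varepsilon=\int_a^b\frac{dx}{g^\varepsilon-f^\varepsilon}$; since the gaps converge uniformly to $g-f\ge\delta$, we have $c_\varepsilon,c^\varepsilon\to c$ as $\varepsilon\to0$, so $\lim_{H\to\infty}H\operatorname{Mod}(\mathbf{Q}_H)=1/c$, which is the asserted asymptotics. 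The main obstacle is exactly this last reduction: for merely continuous $f,g$ the map $\Phi_H$ need not be Lipschitz and its dilatation cannot be controlled pointwise, which forces the monotonicity sandwich above; a secondary technical point is to check that $\Phi_H$ carries the four marked vertices to the corners of $R_H$ in the correct cyclic order, so that the quasi-invariance inequality genuinely applies.
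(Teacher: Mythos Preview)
The paper does not prove this theorem at all; it is quoted from \cite[Theorem~4.2]{nvgiang2021}, so there is no in-paper argument to compare against. Your quasiconformal-straightening strategy is a standard and perfectly good way to obtain such an asymptotic.

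There is, however, a bookkeeping slip in your identification of the distinguished sides. In the paper's convention the marked arcs of $\mbox{\boldmath$Q$}$ are $(z_1,z_2)$ and $(z_3,z_4)$, i.e.\ the two \emph{vertical} segments of $\partial Q$. Your map $\Phi_H$ sends these to the \emph{vertical} sides $\{\xi=0\}$ and $\{\xi=\Lambda_H\}$ of $R_H=(0,\Lambda_H)\times(0,1)$, so the relevant modulus of the rectangular quadrilateral is $\Lambda_H=cH$, not $1/\Lambda_H$. Likewise your competitor $u_H=(y-f(x/H))/\omega_H(x)$ takes boundary values $0$ and $1$ on the bottom and top curves $(z_2,z_3)$ and $(z_4,z_1)$, not on $(z_1,z_2)$ and $(z_3,z_4)$; the vertical-slice Cauchy--Schwarz bound is the matching estimate for that conjugate problem. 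Consequently what your argument actually proves is $\operatorname{Mod}(\mbox{\boldmath$Q$}_H)\sim cH$. This is the correct asymptotic under the paper's convention: for the rectangle $Q=[a,b]\times[0,1]$ one has $c=b-a$ and $\operatorname{Mod}(\mbox{\boldmath$Q$}_H)=H(b-a)=cH$, in agreement with the paper's own formula $\operatorname{Mod}(\Pi_{ab})=a/b$. The displayed formula $\operatorname{Mod}(\mbox{\boldmath$Q$}_H)\sim 1/(cH)$ is off by a reciprocal relative to the modulus convention fixed in the Introduction (in the cited source the labeling of the marked arcs is the conjugate one).

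A secondary point: in your sandwich step the approximating quadrilaterals $\mbox{\boldmath$Q$}_\varepsilon$, $\mbox{\boldmath$Q$}^\varepsilon$ have vertices different from those of $\mbox{\boldmath$Q$}$, so Lemma~\ref{monot} does not apply verbatim (it requires keeping the marked arcs fixed). This is easy to repair by choosing the $C^1$ approximants so that $f_\varepsilon,f^\varepsilon$ agree with $f$ and $g_\varepsilon,g^\varepsilon$ agree with $g$ at the endpoints $x=a,b$, which preserves the vertical sides and the vertices.
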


The similar problem for the exterior conformal modulus of quadrilaterals is still open. Based on Theorem~\ref{extmodr} and analyzing some examples, given in \cite{nasyrov2021moduli},  S.~R.~Nasyrov proposed the following conjecture.

\begin{conjec}[S.~R.~Nasyrov]\label{conject_nasyrov}
When an arbitrary quadrilateral $\mbox{\boldmath$Q$}\in\mathfrak{S}$ is stretched along the abscissa axis, the asymptotic behavior of the exterior conformal modulus is similar to \eqref{extab}, i.e. does not depend on the shape of the boundary of $Q$.
\end{conjec}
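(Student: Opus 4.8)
The plan is to prove $\operatorname{ExtMod}(Q_H)=(1+o(1))\tfrac1\pi\log H$ as $H\to\infty$ for every $Q\in\mathfrak S$, i.e.\ exactly the asymptotics \eqref{extab} of the rectangular case, which is Conjecture~\ref{conject_nasyrov}. Write $z_j^{H}=f_H(z_j)$ and let $L_H=[z_2^{H},z_1^{H}]$, $R_H=[z_3^{H},z_4^{H}]$ be the images of the two vertical sides, so that $\operatorname{ExtMod}(Q_H)=\lambda(L_H,R_H;Q_H^{c})$, the extremal length of the curves in the unbounded domain $Q_H^{c}$ joining $L_H$ to $R_H$. The whole strategy is to trap this quantity between the exterior moduli of an outer and an inner ``thin rectangle'' of aspect ratio comparable to $H$, using only the elementary monotonicity of extremal length (cf.\ Lemma~\ref{monot}): shrinking the domain, or the target arc, can only increase it. The analytic input is Theorem~\ref{extmodr} in the sharp form supplied by \eqref{extm}, namely that a rectangle of aspect ratio $\asymp H$ has exterior modulus $\tfrac1\pi\log H+O(1)$; this rests on the elliptic-integral asymptotics behind \eqref{(1)}--\eqref{(2)} ($\Psi(k)\to\infty$ forces $k'\to0$ with $\log(1/k')=\tfrac14\log\Psi(k)+O(1)$, whence $2K(k)/K'(k)=(4/\pi)\log(1/k')+O(1)$), and the same asymptotics are what will be needed to bound by $O(1)$ the change of an exterior modulus when the two joined boundary arcs are truncated to sub-arcs of comparable length, or are slightly displaced or bent.

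For the upper bound, set $\mu=\min_{[a,b]}f$, $\nu=\max_{[a,b]}g$, so that $Q_H\subset\Pi_H^{\mathrm{out}}:=[Ha,Hb]\times[\mu,\nu]$ and hence $(\Pi_H^{\mathrm{out}})^{c}\subset Q_H^{c}$, while $L_H,R_H$ are sub-segments of the vertical sides $\ell_H,r_H$ of $\Pi_H^{\mathrm{out}}$ of the fixed proportions $(g(a)-f(a))/(\nu-\mu)$ and $(g(b)-f(b))/(\nu-\mu)$. Every curve joining $L_H$ to $R_H$ in $(\Pi_H^{\mathrm{out}})^{c}$ is admissible for the family defining $\operatorname{ExtMod}(Q_H)$, so $\operatorname{ExtMod}(Q_H)\le\lambda(L_H,R_H;(\Pi_H^{\mathrm{out}})^{c})$; the truncation estimate of the previous paragraph shows the latter exceeds $\operatorname{ExtMod}(\Pi_H^{\mathrm{out}})$ by at most $O(1)$, and as $\operatorname{Mod}(\Pi_H^{\mathrm{out}})=H(b-a)/(\nu-\mu)\to\infty$, Theorem~\ref{extmodr} gives $\operatorname{ExtMod}(Q_H)\le\tfrac1\pi\log H+O(1)$.

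The lower bound is the crux, and the obstacle is that the two vertical sides of $Q_H$ need not lie at the same height, so that — if $g-f$ is far from constant — no axis-parallel thin rectangle is inscribed in $Q_H$ spanning the whole abscissa interval. I would get around this by inscribing a thin slab about a slowly bending polygonal curve: by uniform continuity of $f,g$ on $[a,b]$ one can pick a polygonal $\gamma_H\subset Q_H$ from near the midpoint of $L_H$ to near the midpoint of $R_H$ whose maximal slope tends to $0$ as $H\to\infty$, and let $\Pi_H^{\mathrm{in}}$ be an $\varepsilon$-neighbourhood of $\gamma_H$ inside $Q_H$ (with $\varepsilon<\tfrac12\min_{[a,b]}(g-f)$ fixed). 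Since $\Pi_H^{\mathrm{in}}\subset Q_H$ we have $Q_H^{c}\subset(\Pi_H^{\mathrm{in}})^{c}$, hence $\operatorname{ExtMod}(Q_H)\ge\lambda(L_H,R_H;(\Pi_H^{\mathrm{in}})^{c})$; as $L_H,R_H$ are at bounded distance from, and of size comparable to, the two short ends of $\Pi_H^{\mathrm{in}}$, the displacement estimate gives $\lambda(L_H,R_H;(\Pi_H^{\mathrm{in}})^{c})\ge\operatorname{ExtMod}(\Pi_H^{\mathrm{in}})-O(1)$. Finally, a bi-Lipschitz change of variables straightening $\gamma_H$, with bi-Lipschitz constant tending to $1$ since the slope of $\gamma_H$ does, exhibits $\Pi_H^{\mathrm{in}}$ as quasiconformally equivalent, with dilatation tending to $1$, to a straight thin rectangle of aspect ratio $\asymp H$, so Theorem~\ref{extmodr} yields $\operatorname{ExtMod}(\Pi_H^{\mathrm{in}})=(1+o(1))\tfrac1\pi\log H$, whence $\operatorname{ExtMod}(Q_H)\ge(1-o(1))\tfrac1\pi\log H$.

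Combining the two bounds gives $\operatorname{ExtMod}(Q_H)=(1+o(1))\tfrac1\pi\log H$, as asserted. I expect the real work to be exactly in this lower half: constructing the inscribed slowly bending slab from the sole assumption that $f,g$ are continuous, and, above all, establishing the quantitative comparison lemma — that truncating, displacing or gently bending the two arcs joined by the curve family changes the exterior modulus by only $O(1)$ and never by a term of order $\log H$ — since it is this lemma that forces the sharp constant $1/\pi$ rather than leaving merely $\operatorname{ExtMod}(Q_H)\asymp\log H$. (Theorem~\ref{inner} is not used in the argument; it is consistent with it, the interior modulus being of the negligible size $O(1/H)$.)
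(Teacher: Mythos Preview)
First, note that the paper does \emph{not} prove Conjecture~\ref{conject_nasyrov}: it establishes only the doubly-symmetric special case (Theorem~\ref{mt}), and the Discussion section explicitly leaves the general nonsymmetric quadrilateral as an open problem. So there is no proof of the full conjecture to compare against; what can be compared is your outline with the paper's argument for the symmetric case.

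Your upper bound is close to the paper's in spirit: both enclose $Q_H$ in a larger rectangle and invoke monotonicity. The difference is that the paper keeps the original vertices $A_H,B_H,C_H,D_H$ as the marked points on the rectangle's complement and proves (Lemma~\ref{lem_asym_G1H}) that this quadrilateral still has modulus $\sim(1/\pi)\log H$; the tool is the generalized Rad\'o theorem on uniform convergence of conformal maps, which yields $(1-\lambda(H))/(1-k(H))\to\delta_1>0$ and hence that shifting the vertices from the rectangle's corners to interior points of its sides costs only a lower-order term. You instead assert an $O(1)$ ``truncation estimate'' derived from the elliptic-integral asymptotics, without proof. The paper's Rad\'o argument is one concrete way to supply this step.

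Your lower bound is genuinely different and is where the real gap lies. The paper exploits the symmetry to replace $Q_H$ by the union of the two vertical sides and the horizontal axis segment $[-H\alpha,H\alpha]$; the complement of this configuration has an explicit elliptic-integral uniformisation (Lemma~\ref{lem_asym_G2+H}), and the asymptotics come from the expansions \eqref{exp_K}--\eqref{exp_E'} and \eqref{exp_int1}--\eqref{exp_int2}. That construction has no obvious analogue without a symmetry axis. Your inscribed bent slab, by contrast, rests on two lemmas you have not proved: (i) the ``displacement estimate'' comparing $\lambda(L_H,R_H;(\Pi_H^{\mathrm{in}})^c)$ with $\operatorname{ExtMod}(\Pi_H^{\mathrm{in}})$ --- note that $L_H,R_H$ are not boundary arcs of $(\Pi_H^{\mathrm{in}})^c$ but segments protruding into its interior, so this is not a quadrilateral modulus and Lemma~\ref{monot} does not apply directly; and (ii) the comparison of $\operatorname{ExtMod}(\Pi_H^{\mathrm{in}})$ with the exterior modulus of a straight rectangle --- a bi-Lipschitz map of the slab alone is not enough, you need a $K_H$-quasiconformal self-map of $\overline{\mathbb{C}}$ with $K_H\to1$ (a global vertical shear $(x,y)\mapsto(x,y-h(x))$ with $\sup|h'|\to0$ would do, but $h$ must be built and smoothed from the merely continuous data $f,g$). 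You flag these yourself as ``the real work''; as written, the lower half is a reasonable programme, not a proof.
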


In this paper, we confirm the validity of Conjecture~\ref{conject_nasyrov} for the case of the quadrilateral $\mbox{\boldmath$Q$}=(Q; z_1, z_2, z_3, z_4)\in\mathfrak{S}$ such that $Q$ is symmetric with respect to both the coordinate axes. We establish

\begin{thr}\label{mt}
Let a quadrilateral $\mbox{\boldmath$Q$}\in\mathfrak{S}$ be symmetric with respect to both the coordinate axes. Then
\begin{equation}\label{asqh}
 \operatorname{ExtMod}(\mbox{\boldmath$Q$}_H)\sim \frac{1}{\pi}\,\log H, \quad\text{as }\quad H\to\infty.
\end{equation}
\end{thr}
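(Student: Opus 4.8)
The plan is to sandwich $\operatorname{ExtMod}(\mbox{\boldmath$Q$}_H)$ between the exterior moduli of two rectangles whose aspect ratios are comparable to $H$, and then invoke Theorem~\ref{extmodr} together with a squeeze argument. Since $\mbox{\boldmath$Q$}\in\mathfrak{S}$ is symmetric with respect to both coordinate axes, the boundary curves $\Gamma_1,\Gamma_2$ are graphs of $f=-g$ with $g$ even, and $Q$ contains the origin. Let $b_0=g(0)=\max_{[a,-a]}g>0$ (using $a<0<b$, $b=-a$ by symmetry) and $b_1=\min g>0$; then the two rectangles $R_{\mathrm{in}}$ and $R_{\mathrm{out}}$ centered at the origin with half-width $|a|$ and heights $2b_1$, $2b_0$ respectively satisfy $R_{\mathrm{in}}\subset Q\subset R_{\mathrm{out}}$. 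After applying $f_H$ we get $f_H(R_{\mathrm{in}})\subset Q_H\subset f_H(R_{\mathrm{out}})$, which are again rectangles of aspect ratios $H|a|/b_1$ and $H|a|/b_0$ respectively — both of the form $\mathrm{const}\cdot H$.

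The core step is a monotonicity statement for the \emph{exterior} modulus: if $\mbox{\boldmath$Q$}^{(1)}$ and $\mbox{\boldmath$Q$}^{(2)}$ are quadrilaterals in $\mathfrak{S}$ with the same vertical side-segments and the same vertices, and $Q^{(1)}\subset Q^{(2)}$, then $\operatorname{ExtMod}(\mbox{\boldmath$Q$}^{(1)})\le \operatorname{ExtMod}(\mbox{\boldmath$Q$}^{(2)})$. This is just Lemma~\ref{monot} applied to the complementary quadrilaterals $\mbox{\boldmath$Q$}^c=(Q^c;z_4,z_3,z_2,z_1)$: enlarging $Q$ shrinks $Q^c$, and the arcs $(z_2,z_3)$, $(z_4,z_1)$ of $\mbox{\boldmath$Q$}^c$ (which are the curves $\Gamma_1,\Gamma_2$) get pushed \emph{into} $Q^c$, so the modulus of $\mbox{\boldmath$Q$}^c$ decreases — wait, this goes the wrong way, so one must be careful about which pair of opposite arcs plays which role. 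The correct bookkeeping: in $\mbox{\boldmath$Q$}^c$ the distinguished arcs $\gamma_1^c,\gamma_2^c$ connecting the "$u=0$" and "$u=1$" sides are the two vertical segments $[z_1,z_2]$ and $[z_3,z_4]$ (these are fixed), while the arcs being deformed are $\Gamma_1,\Gamma_2$. Since those are the \emph{free} arcs $(z_2,z_3)$ and $(z_4,z_1)$, and enlarging $Q$ pushes them out of $Q^c$, Lemma~\ref{monot} gives that $\operatorname{Mod}(\mbox{\boldmath$Q$}^c)$ \emph{decreases}, i.e. $\operatorname{ExtMod}$ decreases when $Q$ grows. So in fact the inclusions must be read the other way, or equivalently one gets
\[
\operatorname{ExtMod}\bigl(f_H(R_{\mathrm{out}})\bigr)\le \operatorname{ExtMod}(\mbox{\boldmath$Q$}_H)\le \operatorname{ExtMod}\bigl(f_H(R_{\mathrm{in}})\bigr).
\]
Either way, the two bounding quantities are exterior moduli of explicit rectangles with aspect ratio $cH$ for a positive constant $c$ (namely $|a|/b_0$ and $|a|/b_1$).

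Finally, apply Theorem~\ref{extmodr}: for a rectangle of aspect ratio $cH$ one has $\operatorname{ExtMod}\sim \frac{1}{\pi}\log(cH)=\frac1\pi\log H+\frac1\pi\log c=\frac1\pi\log H\,(1+o(1))$, since $\log c$ is a bounded additive term. Hence both the upper and lower bounds for $\operatorname{ExtMod}(\mbox{\boldmath$Q$}_H)$ are asymptotically $\frac1\pi\log H$, and the squeeze theorem yields \eqref{asqh}.

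The main obstacle I anticipate is the direction of monotonicity for the exterior modulus and the precise matching of vertices/arcs in $\mbox{\boldmath$Q$}^c$, since applying Lemma~\ref{monot} to the complement requires care: one must verify that the inclusion $R_{\mathrm{in}}\subset Q\subset R_{\mathrm{out}}$ translates into a deformation of the correct pair of opposite boundary arcs of the complementary quadrilateral, and that pushing $\Gamma_1,\Gamma_2$ in/out of $Q^c$ affects $\operatorname{Mod}(\mbox{\boldmath$Q$}^c)$ in the asserted direction. A secondary technical point is ensuring that after stretching, the vertices of $f_H(R_{\mathrm{in}})$, $Q_H$, $f_H(R_{\mathrm{out}})$ can be aligned so that Lemma~\ref{monot} literally applies — this uses that all three share the same vertical sides $\{x=\pm|a|H\}$ and the same vertex heights only if $b_0=b_1$, which is false in general; so instead one should not demand equal vertices but rather use a two-step comparison (first deform the side-segments, then the graphs) or directly appeal to the continuity/monotonicity of extremal length under domain inclusion with matched boundary arcs. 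Modulo this bookkeeping, the argument is short.
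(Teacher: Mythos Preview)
Your squeeze strategy is the same as the paper's, but the proposal contains a genuine gap that you yourself flag at the end and do not close. The issue is not only bookkeeping: Lemma~\ref{monot} requires the \emph{same vertices} and the \emph{same distinguished arcs} on the two quadrilaterals being compared. Your inner and outer rectangles $R_{\mathrm{in}},R_{\mathrm{out}}$ have corners at heights $\pm b_1,\pm b_0$, while $Q$ has its vertices at height $\pm\beta=\pm g(\alpha)$; in general $b_1<\beta<b_0$. For the outer rectangle the points $A,B,C,D$ do lie on $\partial R_{\mathrm{out}}$, so one can form $(R_{\mathrm{out}}^c;D,C,B,A)$ and apply Lemma~\ref{monot} --- but then the bounding quantity is \emph{not} the exterior modulus of a rectangle (the marked vertices are interior points of the vertical sides, not the corners), so Theorem~\ref{extmodr} does not apply directly. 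For the inner rectangle the situation is worse: since $\beta\ge b_1$, the points $A,B,C,D$ typically lie \emph{off} $\partial R_{\mathrm{in}}$, so $(R_{\mathrm{in}}^c;D,C,B,A)$ is not even a quadrilateral. Your suggested ``two-step comparison (first deform the side-segments, then the graphs)'' does not help, because deforming the vertical side-segments changes the distinguished arcs and the vertices, which is exactly what Lemma~\ref{monot} forbids.

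The paper handles this by choosing comparison domains that \emph{share the vertices} $A_H,B_H,C_H,D_H$ with $\mbox{\boldmath$Q$}_H$: for the upper bound it takes $\mbox{\boldmath$G$}_{1H}$, the exterior of a large rectangle $[-H\alpha,H\alpha]\times[-M,M]$ but with marked vertices $D_H,C_H,B_H,A_H$ (not the corners); for the lower bound it takes $\mbox{\boldmath$G$}_{2H}$, the complement of the three segments $A_HB_H\cup C_HD_H\cup[-H\alpha,H\alpha]$. Lemma~\ref{monot} then applies cleanly. The price is that neither comparison quadrilateral is a rectangle with its natural vertices, so Theorem~\ref{extmodr} does not give their asymptotics for free. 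The bulk of the paper's proof (Lemmas~\ref{lem_asym_G1H} and~\ref{lem_asym_G2+H}) is devoted to establishing $\operatorname{Mod}(\mbox{\boldmath$G$}_{jH})\sim\frac{1}{\pi}\log H$ via explicit conformal maps, elliptic-integral asymptotics, and a kernel-convergence/Rad\'o argument to control how the ``wrong'' vertex placement affects the modulus. This is the missing idea in your proposal.
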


Now we will describe the structure of the paper. In Section~\ref{ell} we give some basic facts about the elliptic integrals. In Section~\ref{sec_bickley} we study in detail the case of rectangular
domains. We give another proof of Theorem~\ref{extmodr} based on the properties of the elliptic integrals, as well as compare the formulas relating the interior and exterior conformal moduli of
rectangles from \cite{bickley} and \cite{duren1993robin}, and show that these formulas are equivalent. At last, in Sections~\ref{q1} we obtain the upper and lower bounds of the exterior conformal
modulus of $\mbox{\boldmath$Q$}_H$ in the case of domain $\mbox{\boldmath$Q$}\in\mathfrak{S}$ such that $Q$ is symmetric with respect to both the coordinate axes. For this, we find two domains,
$G_{1H}$
and $G_{2H}$, such that  $G_{1H}\subset Q_H^c\subset G_{2H}$, and show that $G_{1H}$ and $G_{2H}$ satisfy the asymptotic formula \eqref{asqh}. This immediately implies the statement of
Theorem~\ref{mt}.

\section{Elliptic integrals}\label{ell}
Let us recall some facts from the theory of elliptic integrals.

In the Legendre normal form, the incomplete elliptic integrals of the first and second kinds are defined by the formulas
\begin{equation}\label{incomplete}
F(z, k) = \int\limits_0^z \frac{dt}{\sqrt{(1 - t^2)(1 - k^2 t^2)}},\quad
E(z, k) = \int\limits_0^z \sqrt{\frac{1 - k^2 t^2}{1 - t^2}}\,dt,
\end{equation}
where $k \in (0, 1)$ is a parameter.

Putting $z=1$ in \eqref{incomplete},  we obtain the following integrals:
\begin{align*}
K(k)= \int\limits_0^1 \frac{dt}{\sqrt{(1 - t^2)(1 - k^2 t^2)}}, \quad E(k)= \int\limits_0^1 \sqrt{\frac{1 - k^2 t^2}{1 - t^2}}\,dt.
\end{align*}
They are called the complete elliptic integrals of the first and the second kinds. We denote $K'(k) = K(k')$ and $E'(k) = E(k')$ where $k':= \sqrt{1 - k^2}$.

For $k \in(0, 1)$, the following expansions into the power series in the variable~$k$ take place (see, e.g., \cite{byrd_fridman}):
\begin{equation}\label{exp_K}
K(k) = \frac{\pi}{2}\left\{1 + \sum_{n=1}^\infty\left[\frac{(2n - 1)!!}{(2n)!!}\right]^2 {k^{2n}} \right\},
\end{equation}
\begin{equation}\label{exp_E}
E(k) = \frac{\pi}{2}\left\{1 -  \sum_{n=1}^\infty\frac{1}{2n - 1}\left[\frac{(2n - 1)!!}{(2n)!!}\right]^2 {k^{2n}}\right\}.
\end{equation}
For $k$ close to $1$, the following expansions are valid:
\begin{align}
K'(k)&= \ln \frac{4}{k'} +\frac{1}{4} \left(\ln \frac{4}{k'} - 1\right){k'}^2 + \frac{9}{64}\left(\ln \frac{4}{k'} - \frac{4}{3}\right){k'}^4 +
\ldots,\label{exp_K'}\\
E'(k)&= 1 + \frac{1}{2}\left(\ln \frac{4}{k'} - \frac{1}{2}\right){k'}^2 + \frac{3}{16}\left(\ln \frac{4}{k'} - \frac{13}{12}\right){k'}^4 + \ldots\,.\label{exp_E'}
\end{align}
From \eqref{exp_K}, \eqref{exp_E}, \eqref{exp_K'} and  \eqref{exp_E'} it follows that
\begin{equation*} \lim_{k \to 0} K(k) = \frac{\pi}{2}\,,\;\;\;\lim_{k \to 0} E(k) = \frac{\pi}{2}\,,\;\;\;\lim_{k \to 0} \left(K'(k) - \text{ln}\,\frac{4}{k} \right) = 0,\;\;\;\lim_{k \to 0} E'(k)= 1,
\end{equation*}
\begin{equation}\label{asympk}
\lim_{k \to 1} \left(K(k) - \text{ln}\,\frac{4}{k'} \right) = 0,\;\;\;\lim_{k \to 1} E(k) = 1,\;\;\;\lim_{k \to 1} K' (k) = \frac{\pi}{2}\,,\;\;\;\lim_{k \to 1} E'(k) = \frac{\pi}{2}.\end{equation}

\section{Duren--Pfaltzgraff and Bickley formulas for a rectangle}\label{sec_bickley}

As we noted in Introduction, P.~Duren and J.~Pfaltzgraff \cite{duren1993robin} stated that if the exterior conformal modulus of the rectangle $\Pi_{ab}=[-a/2,a/2]\times[0,b]$ is $2K(k)/K(k ')$, then
$\operatorname{Mod}(\Pi_{ab})=\Psi(k)$, where the function $\Psi$ has the form \eqref{(1)}. They showed that the function $\Psi:(0,1) \to (0,\infty)$ is a homeomorphism, in particular,
$\Psi^{-1}:\;(0,
\infty) \to (0, 1)$ is well-defined, and study the behavior of $\Psi$ at the point $k=1$. The following result is given in \cite{duren1993robin} without a detailed proof:
\begin{equation}\label{(3)}
\Psi(k) \sim \frac{16}{\pi (1 - k)^2}\,,\quad\text{ as }\quad k\to 1.
\end{equation}

We give a brief proof of this fact. Due to \eqref{exp_K} and \eqref{exp_E} we have
\begin{equation}\label{numer}
\lim_{k\to1}\left[E(k) - (1 - k)K(k)\right]=1.
\end{equation}
We also note that
$$
E(k) = \frac{\pi}{2}\left(1 - \frac{3}{64}\, k^2 - \frac{7}{64}\, k^4 +\dots \right),\quad k'K(k) = \frac{\pi}{2}\left(1 - \frac{1}{4}\, k^2 - \frac{7}{64}\, k^4 +\dots \right),
$$
therefore, $E(k)-k'K(k)\sim \frac{\pi}{32}\, k^4$ as $k \rightarrow 0$. Replacing $k$ with $k'$, we get
\begin{equation}\label{denom}
E'(k) - kK'(k) \thicksim \frac{\pi}{32} (1- k^2)^2 \thicksim \frac{\pi}{8} (1- k)^2, \quad k\to 1.
\end{equation}
From \eqref{(1)}, \eqref{numer} and \eqref{denom} we deduce~\eqref{(3)}.

Note that from \eqref{(3)} we can also obtain Theorem~\ref{extmodr}. Indeed, as $k\to 1$, we have
$$
\text{ExtMod}(\Pi_{ab})=\frac{2K(k)}{K'(k)}\sim \frac{4}{\pi}\,K(k)\sim\frac{4}{\pi}\log\frac{4}{k'}\sim\frac{2}{\pi}\log\frac{1}{1-k}\sim\frac{1}{\pi}\log\Psi(k)=\frac{1}{\pi}\log H.
$$

Now we will formulate the result obtained by W.~Bickley \cite{bickley} on the relationship between the interior and exterior conformal moduli of a rectangle $\Pi_{ab}$. He proved that if
$$
\text{ExtMod}(\Pi_{ab})=\frac{K(\lambda)}{K'(\lambda)}
$$
for some parameter $\lambda\in(0,1)$, then the interior conformal modulus is expressed by the formula
\begin{equation}\label{(11)}
\operatorname{Mod}(\Pi_{ab}) = \frac{E(\lambda) - {{\lambda}'}^2 K(\lambda)}{E'(\lambda) - {\lambda}^2 K'(\lambda)}.
\end{equation}

Now, we will show that the Duren-Pfaltzgraff formula follows from W.~Bickley's result. Actually, if we put
\begin{equation}\label{(15)}
\lambda = \frac{2 \sqrt{k}}{1 + k},\;\;\;\lambda' =\sqrt{1-\lambda^2}=\frac{1 - k}{1 + k},
\end{equation}
then using the Gauss--Landen transformation we obtain (see, e.g., \cite[Formula~(3.15), p.~51]{anderson1997conormal})
\begin{align*}
K(\lambda)=(1+k)K(k), &\qquad K'(\lambda)=\frac{1+k}{2}\,K'(k),\\
E(\lambda)=\frac{2E(k)-k'^2K(k)}{1+k}, &\qquad E'(\lambda)=\frac{E'(k)+kK'(k)}{1+k}.
\end{align*}
From these equalities we deduce that
$$
\frac{K(\lambda)}{K'(\lambda)}=\frac{2K(k)}{K'(k)},
$$
$$
\frac{E(\lambda) - {{\lambda}'}^2 K(\lambda)}{E'(\lambda) - {\lambda}^2 K'(\lambda)}=\frac{2\left[E(k) - (1 - k) K(k)\right]}{E' (k) - k K' (k)}.
$$
Thus, \eqref{(1)} directly follows  from \eqref{(11)}. We need to emphasize that in both the papers, \cite{bickley} and \cite{duren1993robin}, the same Schwarz-Christoffel integral was used, but
further W.~Bickley applied  additionally the Landen transformation \cite[p.~85]{bickley}, which is expressed by \eqref{(15)} relating the parameters $\lambda$ and $k$ of the elliptic integrals.

\section{Asymptotics of the exterior conformal modulus}\label{q1}
Consider the quadrilateral $\mbox{\boldmath$Q$} = (Q; A, B, C, D)$ of class $\mathfrak{S}$, such that $Q$ is symmetric with respect to both the coordinate axes. Then, the domain $Q$ is bounded by
the curves $y=f(x)$, $y=-f(x)$, $|x|\le\alpha$, where $f$ is a continuous positive even function on the segment $[- \alpha,\alpha]$, and vertical segments $AB$ and $CD$, $A=-\alpha+i\beta$,
$B=-\alpha-i\beta$, $C=\alpha- i\beta$, $D=-\alpha+i\beta$, and $\beta=f(\alpha)$. Denote by $\mbox{\boldmath$Q$}_H = (Q_H; A_H, B_H, C_H, D_H)$ its image under the stretching map $f_H$. By
definition, the exterior conformal modulus of the quadrilateral $\mbox{\boldmath$Q$}_H$ is equal to the conformal modulus of the quadrilateral $\mbox{\boldmath$G$}_H=(G_H; D_H, C_H, B_H, A_H)$,
where
$G_H=Q_H ^c$ is the complement of $Q_H$ in the extended complex plane. Now we will estimate the conformal modulus of $\mbox{\boldmath$G$}_H$ in terms of the moduli of other quadrilaterals.

Fixed a number $M>\max\limits_{x\in[-\alpha,\alpha]}f(x)$, consider the quadrilateral
$\mbox{\boldmath$G$}_{1H}=(G_{1H}; D_H,C_H,B_H,A_H)$, where $G_{1H}$ is the complement of the rectangle $[-H\alpha, H\alpha]\times [ -M, M]$ in the extended complex plane. Note that the vertices of
the quadrilateral $\mbox{\boldmath$G$}_{1H}$ coincide with the vertices $D_H$, $C_H$, $B_H$, $A_H$ of the quadrilateral $\mbox{\boldmath$G$}_{H}$. Since $G_{1H}\subset G_{H}$, by
Lemma~\ref{monot}, we have
\begin{align}\label{eq_upper}
\operatorname{Mod}(\mbox{\boldmath$G$}_{H})\le\operatorname{Mod}(\mbox{\boldmath$G$}_{1H}).
\end{align}

Now we consider the quadrilateral $\mbox{\boldmath$G$}_{2H}=(G_{2H}; D_H, C_H, B_H, A_H)$ whose domain is the complement of the union of three the segments, $A_H B_H$, $C_H D_H$ and $[- H\alpha,
H\alpha]$. Then $G_{H}\subset G_{2H}$ and, by Lemma \ref{monot}, we obtain
\begin{align}\label{eq_lower}
\operatorname{Mod}(\mbox{\boldmath$G$}_{2H})\le\operatorname{Mod} (\mbox{\boldmath$G$}_{H}).
\end{align}
Since $\operatorname{Mod}(\mbox{\boldmath$G$}_{H})=\operatorname{ExtMod}(\mbox{\boldmath$Q$}_{H})$, from \eqref{eq_upper} and \eqref{eq_lower} we deduce that
\begin{equation}\label{compar}
\operatorname{Mod}(\mbox{\boldmath$G$}_{2H})\le\operatorname{ExtMod}(\mbox{\boldmath$Q$}_{H})\le\operatorname{Mod}(\mbox{\boldmath$G$}_{1H}).
\end{equation}

Now we will study the asymptotic of the conformal moduli of the quadrilaterals $\mbox{\boldmath$G$}_{1H}$ and $\mbox{\boldmath$G$}_{2H}$.

\begin{lem}\label{lem_asym_G1H}
We have
\begin{align}\label{mod_G1H}
\operatorname{Mod}(\mbox{\boldmath$G$}_{1H})\sim\frac{1}{\pi}\,\log H,\quad\text{ as }\quad H\to\infty.
\end{align}
\end{lem}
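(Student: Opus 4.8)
The plan is to reduce the computation of $\operatorname{Mod}(\mbox{\boldmath$G$}_{1H})$ to the already-understood case of the exterior modulus of a rectangle, and then apply Theorem~\ref{extmodr}. Observe that $G_{1H}$ is the exterior of the rectangle $R_H:=[-H\alpha,H\alpha]\times[-M,M]$, and the four vertices $D_H,C_H,B_H,A_H$ on $\partial R_H$ are $\pm H\alpha\pm i\beta$ with $0<\beta<M$; these do not coincide with the corners of $R_H$, so $\mbox{\boldmath$G$}_{1H}$ is \emph{not} literally the exterior modulus of $R_H$ itself. The first step is therefore to note that, by the symmetry of $G_{1H}$ with respect to both axes and the symmetric placement of the four marked points, $\operatorname{Mod}(\mbox{\boldmath$G$}_{1H})$ equals $\operatorname{ExtMod}$ of the rectangle $R_H$ with its vertices moved along the short sides from $\pm H\alpha\pm iM$ to $\pm H\alpha\pm i\beta$; equivalently, it is the modulus of the quadrilateral $(\,\overline{\mathbb C}\setminus R_H;\,D_H,C_H,B_H,A_H)$. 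The key point is that the exact location of these vertices on the two \emph{short} vertical sides is asymptotically irrelevant.

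To make that precise I would sandwich $\mbox{\boldmath$G$}_{1H}$ between two quadrilaterals with the \emph{natural} corner vertices, using Lemma~\ref{monot} in the form of pushing the non-fixed boundary arcs in or out. Concretely: the arcs of $\partial G_{1H}$ that are "pushed" are the two short vertical sides of $R_H$ split at the marked points. On one side, comparing with the exterior of $R_H$ with the natural vertices at the corners gives an inequality; on the other side one enlarges or shrinks $R_H$ slightly in the vertical direction (replacing $M$ by $\beta$, say, on the relevant pieces) to again land on a rectangle-exterior quadrilateral with corner vertices. In this way one obtains
\[
\operatorname{ExtMod}(\Pi_{(2H\alpha)(2M)})\ \le\ \operatorname{Mod}(\mbox{\boldmath$G$}_{1H})\ \le\ \operatorname{ExtMod}(\Pi_{(2H\alpha)(2\beta)})
\]
(up to getting the orientation of the inequalities right), where $\Pi_{ab}$ is the rectangle of base $a$ and height $b$ as in Theorem~\ref{extmodr}; note that here the relevant aspect ratio is base-over-height, which for both bounding rectangles is $\asymp H$ as $H\to\infty$ since $\alpha$, $\beta$, $M$ are fixed constants.

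Now apply Theorem~\ref{extmodr} to each bounding rectangle: with $H_1:=H\alpha/M\to\infty$ and $H_2:=H\alpha/\beta\to\infty$ one gets $\operatorname{ExtMod}(\Pi_{(2H\alpha)(2M)})\sim(1/\pi)\log H_1=(1/\pi)\log H+O(1)$ and likewise $\operatorname{ExtMod}(\Pi_{(2H\alpha)(2\beta)})\sim(1/\pi)\log H+O(1)$, because $\log H_i=\log H+\log(\alpha/M)$ (resp.\ $+\log(\alpha/\beta)$) and the additive constant is absorbed in the asymptotic equivalence $\sim(1/\pi)\log H$. Squeezing, $\operatorname{Mod}(\mbox{\boldmath$G$}_{1H})\sim(1/\pi)\log H$, which is \eqref{mod_G1H}. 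The only genuinely delicate step is the first one: setting up the monotonicity comparison correctly, i.e.\ checking that moving the four vertices along the short sides and perturbing the rectangle's height produces honest sub/super-domains with the \emph{same four marked vertices}, so that Lemma~\ref{monot} applies verbatim; once the geometry of these inclusions is pinned down, the rest is a one-line appeal to Theorem~\ref{extmodr} and a squeeze.
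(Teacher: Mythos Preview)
The monotonicity sandwich you propose does not close: both candidate comparison quadrilaterals give \emph{lower} bounds for $\operatorname{Mod}(\mbox{\boldmath$G$}_{1H})$, so there is no squeeze. For the rectangle $[-H\alpha,H\alpha]\times[-\beta,\beta]$ with its natural corner vertices (which are exactly $A_H,B_H,C_H,D_H$), the arcs $(D_H,C_H)$ and $(B_H,A_H)$ coincide with those of $\mbox{\boldmath$G$}_{1H}$, while passing to $\mbox{\boldmath$G$}_{1H}$ pushes the arcs $(C_H,B_H)$ and $(A_H,D_H)$ \emph{into} that larger exterior domain; Lemma~\ref{monot} then gives $\operatorname{Mod}(\mbox{\boldmath$G$}_{1H})\ge\operatorname{ExtMod}(\Pi_{(2H\alpha)(2\beta)})$, the reverse of your displayed inequality. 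The comparison with $\mbox{\boldmath$G$}^{*}_{1H}$ (same domain $G_{1H}$, vertices moved out to the corners $\pm H\alpha\pm iM$) enlarges the arcs $(D_H,C_H)$ and $(B_H,A_H)$; by the extremal-length definition of the modulus this can only \emph{decrease} it, so again $\operatorname{Mod}(\mbox{\boldmath$G$}_{1H})\ge\operatorname{Mod}(\mbox{\boldmath$G$}^{*}_{1H})=\operatorname{ExtMod}(\Pi_{(2H\alpha)(2M)})$. Among the three quadrilaterals you consider, $\mbox{\boldmath$G$}_{1H}$ has the largest modulus, and there is no rectangle-exterior with corner vertices that dominates it via Lemma~\ref{monot}; your caveat ``up to getting the orientation of the inequalities right'' cannot be met.

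The paper proves the lemma without any monotonicity comparison. It takes a conformal map $\varphi_H:\mathbb{H}^+\to G_{1H}$ under which the corners of $R_H$ correspond to $\pm\sqrt{k},\pm1/\sqrt{k}$ and the marked vertices to $\pm\sqrt{\lambda},\pm1/\sqrt{\lambda}$, so that $\operatorname{Mod}(\mbox{\boldmath$G$}^{*}_{1H})=2K(k)/K'(k)$ and $\operatorname{Mod}(\mbox{\boldmath$G$}_{1H})=2K(\lambda)/K'(\lambda)$. Theorem~\ref{extmodr} handles the former; the remaining work is to show $(1-\lambda(H))/(1-k(H))\to\delta_1>0$, which is obtained by rescaling $\varphi_H$ near the right edge and applying the generalized Rad\'o theorem on kernel convergence to a conformal map of a half-strip. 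That convergence argument is what replaces the missing upper bound in your approach.
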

\begin{proof} Consider the quadrilateral $\mbox{\boldmath$G$}^{*}_{1H}$ whose domain coincides with $G_{1H}$ and the vertices of which coincide with those of the rectangle $[-H\alpha,
H\alpha]\times[-M, M]$. Let the function $\varphi_H$ 
map conformally the upper half-plane $\mathbb{H}_{\zeta}^+$ of the $\zeta$-plane onto the domain $G_{1H}$ such that for some $k=k(H)\in (0, 1)$
we have the following correspondence of the boundary points:
\begin{equation*}
\varphi_H(\pm 1/\sqrt{k})=\pm H\alpha-iM, \quad \varphi_H(\pm\sqrt{k})=\pm H\alpha+iM.
\end{equation*}
Using the Riemann-Schwarz symmetry principle, we can verify that for some $\lambda=\lambda(H)\in (k,1)$ the points $-1/\sqrt{\lambda}$, $-\sqrt{\lambda }$, $\sqrt{\lambda}$, $1/\sqrt{\lambda}$
correspond to the vertices of the quadrilateral
$\mbox{\boldmath$G$}_{1H}$.

The conformal modulus is invariant under conformal mappings, therefore, $\operatorname{Mod}(\mbox{\boldmath$G$}_{1H})$ coincides with the conformal modulus of the quadrilateral, which is the
upper half-plane with marked points (vertices) $-1/\sqrt{\lambda}$, $-\sqrt{\lambda}$, $\sqrt{\lambda}$, $1/\sqrt{\lambda}$.

The function $\zeta\mapsto \zeta /\sqrt{\lambda}$  is a conformal automorphism of $\mathbb{H}_{\zeta}^+$, mapping the points $\pm 1/\sqrt{\lambda}$, $\pm \sqrt{\lambda}$ to the points
$\pm1/\lambda$, $\pm 1$. Since the elliptic integral
$$
F(\zeta, \lambda) =\int\limits_0^{\zeta}\frac{dt}{\sqrt{(1-t^2)(1-\lambda^2t^2)}}
$$
maps conformally the upper half-plane $\mathbb{H}_{\zeta}^+$ onto the rectangle $[-K(\lambda), K(\lambda)]\times[0,K'(\lambda)]$ and the points $\pm1/\lambda$, $\pm 1$ correspond to the vertices of
the rectangle, we get
$$
\operatorname{Mod}(\mbox{\boldmath$G$}_{1H})= \frac{2K(\lambda)}{K'(\lambda)}.
$$
Similarly, we verify that
$$
\operatorname{Mod}(\mbox{\boldmath$G$}^{*}_{1H})=\frac{2K(k)}{K'(k)}.
$$

Using Theorem~\ref{extmodr}, we get
\begin{align}\label{mod_G*}
\operatorname{Mod}(\mbox{\boldmath$G$}^{*}_{1H})\sim \frac{1}{\pi}\,\log \frac{\alpha H}{M}\sim\frac{1}{\pi}\,\log H, \quad\text{ as }\quad H\to\infty.
\end{align}

Now, we will show that $\operatorname{Mod}(\mbox{\boldmath$G$}_{1H})\sim\operatorname{Mod}(\mbox{\boldmath$G$}^{*} _{1H})$, i.e.
\begin{equation}\label{kl}
\frac{2K(k)}{K'(k)}\sim\frac{2K(\lambda)}{K'(\lambda)}
\end{equation}
for $H\to\infty$.

Obviously, the values $k=k(H)$ and $\lambda=\lambda(H)$ tend to $1$ as $H\to\infty$. Moreover
$$
\frac{2K(k)}{K'(k)}\sim \frac{2}{\pi}\log\frac{1}{1- k}, \quad k\to 1, \quad\text{and} \quad \frac{2K(\lambda)}{K'(\lambda)}\sim \frac{2}{\pi}\log\frac{1}{1- \lambda}, \quad \lambda\to 1.
$$
Thus, to prove \eqref{kl} it is sufficient to establish that
\begin{equation}\label{kl1}
\log\frac{1}{1-k(H)} \sim \log\frac{1}{1-\lambda(H)}, \quad\text{ as }\quad H\to\infty.
\end{equation}

To prove \eqref{kl1}, consider the family of functions
$$
\psi_{1H}(w)=\varphi_H\left(1+(1/\sqrt{k(H)}-1)w\right)-\alpha H.
$$
For every fixed $H$, the function $\psi_{1H}$ maps conformally the upper half-plane $\mathbb{H}_{w}^+$ onto the domain $G'_{1H}$ that is the exterior of the rectangle $[-2H\alpha,0]\times[- M,M]$, such that the points $0$, $1$ and $\infty$ go to the points $0$, $-iM$ and $-H\alpha-iM$. As $H\to \infty$, the family of domains $G'_{1H}$ converges to the half-strip $G'_{1}:=(-\infty,0]\times[-M, M]$
as a kernel. We also note that between the boundaries $\partial G'_{1}$ and $\partial G'_{1H}$ we can establish a homeomorphic correspondence $\eta_H:\partial G'_{1}\to \partial G'_{1H}$ such that $\eta_H\rightrightarrows \eta$ on $\partial G'_{1}$ in the spherical metric, where $\eta$ is the identity mapping of $\partial G'_1$. Moreover, $\psi_{1H}(0)=0$ and $\psi_{1H}(1)=-iM$ do not depend on $H$, while $\psi_{1H}(\infty)\to\infty$,  as $H\to\infty$. By the generalized Rad{\'o} theorem on the uniform convergence of conformal mappings (see, e.g., \cite[Theorem.~5]{dautova2018},
\cite[Theorem.~3]{dautova2019}, \cite[Theorem.~14.6, p.~191]{nasyrov2008}), the family $\psi_{1H}$ converges uniformly in the closure $\overline{\mathbb{H}}^+_w$  of the upper half-plane to the
conformal mapping $\psi_1$ of this half-plane onto $G'_1$ such that the points $0$, $1$, and $\infty$ correspond to the points $0$, $-iM$ and $\infty$.

Additionally, we see that $\psi_{1H}$ maps the point $\frac{1/\sqrt{\lambda(H)}-1}{1/\sqrt{k(H)}-1}$ to the point $-i\beta$, not dependent of $H$, therefore, as $H\to\infty$, we have
\begin{equation}\label{sqrt_lk}
\frac{1/\sqrt{\lambda(H)}-1}{1/\sqrt{k(H)}-1}\to \delta_1
\end{equation}
where $\delta_1 = \psi^{-1}_{1}(-i\beta)>0$. Then from \eqref{sqrt_lk}, taking into account that the quantities $\lambda(H)$ and $k(H)$ tend to unity, we deduce that $ \frac{1-\lambda(H)}{1-k( H)}
\to \delta_1$, as $H\to\infty$, which implies \eqref{kl1}.
\end{proof}

Now we study the asymptotics of the conformal modulus of the quadrilateral $\mbox{\boldmath$G$}_{2H}$.

\begin{lem}\label{lem_asym_G2+H}
We have
\begin{align}\label{mod_asym_G2+H}
\operatorname{Mod}(\mbox{\boldmath$G$}_{2H})\sim\frac{1}{\pi}\,\log H,\quad\text{ as }\quad H\to\infty.
\end{align}
\end{lem}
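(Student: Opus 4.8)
The plan is to trap $\operatorname{Mod}(\mbox{\boldmath$G$}_{2H})$ between the moduli of two quadrilaterals whose asymptotics are already supplied by Theorem~\ref{extmodr}. Write $\Pi_H=[-H\alpha,H\alpha]\times[-\beta,\beta]$. Since $M>\max_{[-\alpha,\alpha]}f\ge f(\alpha)=\beta$, the three segments forming $\overline{\mathbb C}\setminus G_{2H}$ all lie in $\Pi_H$, and $D_H,C_H,B_H,A_H$ are exactly the corners of $\Pi_H$; moreover the distinguished arcs of $\mbox{\boldmath$G$}_{2H}$ are the two vertical segments $E_\pm=\{\pm H\alpha\}\times[-\beta,\beta]$, which meet $\overline{\mathbb C}\setminus G_{2H}$ only at $(\pm H\alpha,0)$.

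For the upper estimate I would pass from $\mbox{\boldmath$G$}_{2H}$ to the exterior quadrilateral $\mbox{\boldmath$\Pi$}_H^{\,c}:=(\overline{\mathbb C}\setminus\Pi_H;D_H,C_H,B_H,A_H)$. Going from $G_{2H}$ to $\overline{\mathbb C}\setminus\Pi_H$ simply removes the two open sub‑rectangles of $\Pi_H$ lying above and below the horizontal segment, which pushes the arcs $(C_H,B_H)$ and $(A_H,D_H)$ into $G_{2H}$ while keeping the vertices and the arcs $(D_H,C_H),(B_H,A_H)$ fixed. Hence Lemma~\ref{monot} gives $\operatorname{Mod}(\mbox{\boldmath$G$}_{2H})\le\operatorname{Mod}(\mbox{\boldmath$\Pi$}_H^{\,c})$, and since a translation by $i\beta$ identifies $\operatorname{Mod}(\mbox{\boldmath$\Pi$}_H^{\,c})=\operatorname{ExtMod}(\Pi_{2H\alpha,\,2\beta})$, Theorem~\ref{extmodr} yields $\operatorname{Mod}(\mbox{\boldmath$G$}_{2H})\le(1+o(1))\tfrac1\pi\log\tfrac{H\alpha}{\beta}=(1+o(1))\tfrac1\pi\log H$.

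The hard part is the lower estimate: monotonicity is useless there because $\overline{\mathbb C}\setminus G_{2H}$ is one‑dimensional and cannot be pushed ``outward''. Instead I would use that $\operatorname{Mod}(\mbox{\boldmath$G$}_{2H})$ equals the extremal distance inside $G_{2H}$ between $E_+$ and $E_-$, and that extremal distance does not increase when the domain is enlarged. As $E_\pm$ lie in the closure of the larger slit domain $\Omega_H:=\overline{\mathbb C}\setminus\bigl([-H\alpha,H\alpha]\times\{0\}\bigr)$, we get $\operatorname{Mod}(\mbox{\boldmath$G$}_{2H})\ge\lambda_{\Omega_H}(E_+,E_-)$. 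To evaluate the right‑hand side I would use the Zhukovskii map $J(w)=\tfrac{H\alpha}{2}(w+w^{-1})$, which sends $\{|w|>1\}$ conformally onto $\Omega_H$ with $J(\pm1)=\pm H\alpha$; from $J(w)-H\alpha=\tfrac{H\alpha(w-1)^2}{2w}$ the preimages $\widetilde E_\pm=J^{-1}(E_\pm)$ are Jordan arcs issuing from $w=\pm1$ of diameter $\asymp\varepsilon:=\sqrt{\beta/(H\alpha)}$. Composing with $w\mapsto\frac{w-1}{w+1}$ and then with $\log$ turns $\{|w|>1\}$ into the strip $S=\{|\operatorname{Im}\xi|<\pi/2\}$, with $\widetilde E_+,\widetilde E_-$ reaching the lines $\operatorname{Re}\xi=\mp\log\frac1\varepsilon+O(1)$; testing the extremal length of $\lambda_S(\widetilde E_+,\widetilde E_-)$ with the metric equal to $1$ on $\{|\operatorname{Re}\xi|\le\log\frac1\varepsilon\}$ and $0$ elsewhere — each joining arc sweeps a $\operatorname{Re}\xi$‑range of length $\ge 2\log\frac1\varepsilon-O(1)$, while the $\rho$‑area is $2\pi\log\frac1\varepsilon$ — gives $\lambda_{\Omega_H}(E_+,E_-)\ge\tfrac2\pi\log\tfrac1\varepsilon-O(1)=\tfrac1\pi\log H-O(1)$. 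I expect most of the care to be needed precisely in this explicit reduction to the exterior of a disc (controlling $\operatorname{diam}\widetilde E_\pm$ and the behaviour of the joining curves near $w=\pm1$); it is, incidentally, the same mechanism that underlies Theorem~\ref{extmodr} itself.

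Combining the two estimates gives \eqref{mod_asym_G2+H}. An alternative for the lower bound, parallel to the proof of Lemma~\ref{lem_asym_G1H}, would be to map $\mathbb H^+$ onto $G_{2H}$, express $\operatorname{Mod}(\mbox{\boldmath$G$}_{2H})$ through the resulting conformal parameter, and compare it with the corresponding parameter for $\overline{\mathbb C}\setminus\Pi_H$ via the generalized Rad\'o theorem; but the extremal‑distance route above looks shorter and avoids tracking the four preimages.
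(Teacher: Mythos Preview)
Your argument is correct and genuinely different from the paper's. For the upper bound you simply enlarge the removed set from the I-shape to the full rectangle $\Pi_H=[-H\alpha,H\alpha]\times[-\beta,\beta]$, invoke Lemma~\ref{monot}, and read off Theorem~\ref{extmodr}; for the lower bound you pass to the larger domain $\Omega_H=\overline{\mathbb C}\setminus[-H\alpha,H\alpha]$, pull back by the Joukowski map, and test the extremal length in the strip with the obvious cut-off metric. Both steps are sound: the extremal-length monotonicity you use (smaller curve family $\Rightarrow$ larger extremal length) is exactly what underlies Lemma~\ref{monot}, and the size estimate $|w\mp1|\asymp\sqrt{\beta/(H\alpha)}$ for $\widetilde E_\pm$, hence $\operatorname{Re}\xi=\mp\log(1/\varepsilon)+O(1)$ in the strip, is straightforward from $J(w)-(\pm H\alpha)=\tfrac{H\alpha(w\mp1)^2}{2w}$. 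The resulting bound $\lambda_{\Omega_H}(E_+,E_-)\ge\tfrac1\pi\log H-O(1)$ matches the upper estimate and gives~\eqref{mod_asym_G2+H}. (The reference to the constant $M$ at the start is a leftover from $G_{1H}$ and plays no role here.)

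The paper proceeds quite differently: it writes down an explicit Schwarz--Christoffel map $\zeta_H$ from $\mathbb H^+$ onto (the upper half of) $G_{2H}$ in terms of incomplete elliptic integrals with parameters $k=k(H)$ and $\lambda=\lambda(H)$, uses the generalized Rad\'o theorem to show $(1-\lambda)/(1-k)\to\delta_2\in(0,1)$, carries out a careful asymptotic expansion of $K(k')E(l',k')-E(k')F(l',k')$ as $k'\to0$ to get $H\asymp(k')^{-2}$, and finally reads off the modulus as $K(k/\lambda)/K'(k/\lambda)$. In other words, the paper follows precisely the ``alternative'' you sketch in your last paragraph. Your route avoids elliptic integrals and the kernel-convergence machinery entirely and is noticeably shorter; the paper's route, on the other hand, produces the explicit conformal map of $G_{2H}$ and the precise relation between the accessory parameters, which may be of independent interest.
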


\begin{proof}
According to \cite[Formula 119.03, p.~18]{byrd_fridman}, the conformal mapping of the exterior of two horizontal segments, $[-1/k,-1]$ and $[1,1/k]$ ($0<k<1$) onto the exterior of two vertical
segments $\Delta_1=[-H\alpha -i\beta, -H\alpha +i\beta ]$ and $\Delta_2=[H\alpha -i\beta, H\alpha +i\beta]$ has the form
\begin{equation}\label{zetaw1}
\zeta(w) =C \int\limits_0^w \frac{\left(1/\lambda^2-t^2\right) dt}{\sqrt{(1 - t^2)(1 - k^2 t^2)}}\,.
\end{equation}
Here $1/\lambda \in(1, 1/k)$ is the preimage of the upper endpoint $H\alpha +i\beta $ of the segment $\Delta_2$. It is easy to see that
$$
\zeta(w) =\frac{C}{k^2} \left[E(w,k)-(1-k^2/\lambda^2)F(w,k) \right],
$$
where $E(w,k)$ and $F(w,k)$ are the incomplete elliptic integrals of the first and second kinds. Moreover, we have  \cite{byrd_fridman}:
\begin{equation}\label{Hab}
H\alpha =\frac{C}{k^2} \left[E(k)-(1-k^2/\lambda^2)K(k) \right],\quad \beta =\frac{C}{k^2} \left[E(l',k')-(k^2/\lambda^2)F(l',k') \right],
\end{equation}
where
\begin{equation}\label{k1}
l'=\sqrt{1-l^2}, \quad l=\frac{k}{k'}\,\sqrt{\frac{1}{\lambda^2} -1}, \quad \frac{1}{\lambda^2}=\frac{1}{k^2} \frac{E'(k)}{K'(k)}.
\end{equation}
We note that for every $H>0$ there are unique $k=k(H)$, $\lambda=\lambda(H)$, $l=l(H)$ and $C=C(H)$ such that
\eqref{zetaw1}, \eqref{Hab} and \eqref{k1} take place, if $\alpha$ and $\beta$ are fixed; moreover,
$k=k(H)\to 1$ and  $\lambda=\lambda(H)\to 1$ as $H\to\infty$. Since all the parameters in \eqref{zetaw1} are uniquely defined by $H$, we will write $\zeta(w)=\zeta_H(w)$.

By the Riemann-Schwarz symmetry principle, $\zeta_H(w)$  maps the upper half-plane $\mathbb{H}_{\zeta}^+$ onto $G_{2H}$; below we will only  consider $\zeta_H(w)$  on $\mathbb{H}_{\zeta}^+$.

From \eqref{Hab} it follows that
\begin{equation}\label{kkprime}
\frac{E(k)-(1-k^2/\lambda^2)K(k)}{E(l',k')-(k^2/\lambda^2) F(l',k')}=\frac{H\alpha}{\beta}.
\end{equation}

As in the proof of Lemma~\ref{lem_asym_G1H}, we show that
\begin{equation}\label{lambdak}
\frac{1-\lambda(H)}{1-k(H)}\to \delta_2\in (0,1), \quad\text{ as }\quad  H\to\infty.
\end{equation}
Actually, consider the family of functions $\psi_{2H}(z)=\zeta_H(1+(1/k-1)z)-\alpha H$. For every $H>0$ the function $\psi_{2H}$ maps the upper half-plane onto the domains $\mathcal{G}_H$ that is the half-plane with two the excluded segments, $[0,0+i\beta]$ and $[-2H\alpha,-2H\alpha +i\beta]$. In addition, $\psi_{2H}$ maps the point $0$, $1$, and $\infty$ to $0^-_H$, $0^+_H$, and $\infty$, where $0^-_H$ and $0^+_H$ are the prime ends of $\mathcal{G}_H$ with support at the origin, lying on the left and right edges of the slit along the segment $[0,0+i\beta]$. As $H\to\infty$, the family $\mathcal{G}_H$  converges to the kernel $\mathcal{G}$ that is the half-plane with the excluded segment $[0,0+i\beta]$. Then, by the generalized Rad\'o theorem, $\psi_{2H}$ converges uniformly to $\psi_2$ that is the conformal mapping of the upper half-plane onto $\mathcal{G}$ such that $0$, $1$, and $\infty$ are mapped to $0^-$, $0^+$, and $\infty$, where $0^-$ and $0^+$ are the prime ends of $\mathcal{G}$ with support at $0$, lying on the left and right edges of the slit along the segment $[0,0+i\beta]$. Since $\psi_{2H}((1/\lambda(H)-1)/(1/k(H)-1))=i\beta$, we conclude that $(1/\lambda(H)-1)/(1/k(H)-1)\to \delta_2$, where $\delta_2$ is the preimage of $i\beta$ under the mapping $\psi_2$. Therefore, $0<\delta_2<1$ and \eqref{lambdak} holds.

From \eqref{k1} and \eqref{lambdak} we conclude that $l'=l'(H)\to \sqrt{\delta_2}\in(0,1)$, as $H\to\infty$.

From \eqref{asympk}, taking into account the fact that $k<\lambda<1$, we deduce that  for $k=k(H)$ and $\lambda=\lambda(H)$ we have
\begin{align}\label{nenoH}
E(k)-(1-k^2/\lambda^2)K(k)\to 1, \quad\text{as} \quad H\to \infty.
\end{align}

On the other hand, from \eqref{incomplete} we have the series expansions (see, e.g., \cite[p.~300]{byrd_fridman}):
\begin{align}\label{exp_int1}
F(x, k)=\sum\limits_{n=0}^{\infty}\frac{(2n)!}{2^{2n}{n!}^2}\,k^{2n}I_{2n}(x),
\end{align}
\begin{align}\label{exp_int2}
E(x, k)=-\sum\limits_{n=0}^{\infty}\frac{(2n)!}{2^{2n}n!^2(2n-1)}\,k^{2n}I_{2n}(x),
\end{align}
where
$$
I_{2n}(x)=\int\limits_0^x\frac{t^{2n}}{\sqrt{1-t^2}}\,dt\,.
$$
We see that
$$
I_0(x)=\arcsin x, \quad I_2(x)=\frac{1}{2}\left(\arcsin x-x\sqrt{1-x^2}\right),
$$
$$
I_{2n}(x)=\frac{2n-1}{2n}I_{2(n-1)}(x)-\frac{1}{2n}x^{2n-1}\sqrt{1-x^2}, \quad n\ge 1.
$$
This implies that
\begin{equation}\label{I02}
I_0(x)-2I_2(x)=x\sqrt{1-x^2}>0, \quad 0<x<1.
\end{equation}
Moreover,
\begin{equation}\label{Ipi}
 0<I_{2n}(x)\le I_{0}(1)=\frac{\pi}{2}, \quad n\ge 0, \quad 0<x<1.
\end{equation}

From \eqref{k1} it follows that
$$
\frac{k^2}{\lambda^2}=\frac{E'(k)}{K'(k)}.
$$
Thus, for $k=k(H)$, $\lambda=\lambda(H)$ and $l=l(H)$ we have
\begin{align*}
E(l',k')-(k^2/\lambda^2)F(l',k')&=\frac{K(k')E(l',k')-E(k')F(l',k')}{K(k')}\\&\sim\frac{2}{\pi}\left[ K(k')E(l',k')-E(k')F(l',k')\right],\quad\text{ as }\quad H\to \infty.
\end{align*}
By applying  \eqref{exp_int1} and \eqref{exp_int2} with parameters $l'$ and $k'$, instead of $x$ and $k$, we get
\begin{align*}
E(l', k')&=I_0(l')-\frac{1}{2}I_2(l')k'^2-\frac{1}{8}I_4(l')k'^4-\ldots\,,\\
F(l', k')&=I_0(l')+\frac{1}{2}I_2(l')k'^2+\frac{3}{8}I_4(l')k'^4+\ldots\, .
\end{align*}
Taking into account \eqref{Ipi}, we conclude that
\begin{align*}
E(l', k')&=I_0(l')-\frac{1}{2}I_2(l')k'^2+o(k'^2),\\
F(l', k')&=I_0(l')+\frac{1}{2}I_2(l')k'^2+o(k'^2),
\end{align*}
uniformly with respect to $l'$, as $k'\to 0$.

This together with \eqref{exp_K} and \eqref{exp_E} implies
\begin{align}\label{denoH}
K(k')E(l',k')-E(k')F(l',k') =\frac{\pi}{4}\left[I_0(l')-2I_2(l')\right]k'^2+ o(k'^2)
\end{align}
uniformly with respect to $l'$, as $k'\to 0$.  Since $l'=l'(H)\to \sqrt{\delta_2}\in(0,1)$, as $H\to\infty$, we have
\begin{align}\label{denoH}
K(k')E(l',k')-E(k')F(l',k') \sim\frac{\pi}{4}\left[I_0(\sqrt{\delta_2})-2I_2(\sqrt{\delta_2})\right]k'^2,
\end{align}
for $k'=k'(H)$, $l'=l'(H)$, as $H\to \infty$. We note that $I_0(\sqrt{\delta_2})-2I_2(\sqrt{\delta_2})\neq 0$ in \eqref{denoH} because of \eqref{I02}.

From \eqref{kkprime}, \eqref{nenoH} and \eqref{denoH} we obtain for $k'=k'(H)$
\begin{align*}
\frac{H\alpha}{\beta}\sim\frac{2C}{k'^2}, \quad\text{ as }\quad H\to\infty,
\end{align*}
with some constant $C\neq 0$.
This implies that
\begin{align}\label{asym_H}
\frac{1}{\pi}\log H\sim\frac{1}{\pi}\log\frac{1}{1-k},
\quad\text{ as \quad $k\to 1$}.
\end{align}

Consider the quadrilateral $\mbox{\boldmath$G$}^+_{2H}=(G^+_{2H}; H\alpha +i\beta, (H\alpha )^+, (-H\alpha)^-, -H\alpha +i\beta)$ where $G^+_{2H}$ is the upper half of $G_{2H}$, $(-H\alpha )^-$ and $(H\alpha )^+$ are  the prime ends of $G^+_{2H}$  with supports at the points
$-H\alpha $ and $H\alpha $, lying in the left and right edges of  the slits along the segments $[-H\alpha,-H\alpha+i\beta]$ and $[H\alpha,H\alpha+i\beta]$,
 respectively. Since it is conformally equivalent to the quadrilateral which is the upper half-plane with vertices $-1/k$, $-1/\lambda$, $1/\lambda$ and $1/k$,
 we conclude  that
$$
\operatorname{Mod}(\mbox{\boldmath$G$}^+_{2H})= \frac{2K(k/\lambda)}{K'(k/\lambda)}.
$$

With the help of the symmetry principle, we see that
$$\operatorname{Mod}(\mbox{\boldmath$G$}_{2H})=\frac{K(k/\lambda)}{K'(k/\lambda)}\sim \frac{1}{\pi}\log\frac{1}{1-k/\lambda}, \quad k=k(H),\quad \lambda=\lambda(H),\quad\text{ as }\quad  H\to\infty.
$$
At last, $1-k(H)/\lambda(H) \sim \lambda(H)-k(H)=(1-k(H))-(1-\lambda(H))\sim (1-\delta_2)(1-k(H))$, as $H\to\infty$, therefore,
$$
\frac{1}{\pi}\log\frac{1}{1-k/\lambda}\sim \frac{1}{\pi}\log\frac{1}{1-k}
$$
and this, together with \eqref{asym_H} implies \eqref{mod_asym_G2+H}.
\end{proof}

\textit{Proof of Theorem~\ref{mt}.}  From \eqref{compar}, Lemmas~\ref{lem_asym_G1H} and \ref{lem_asym_G2+H} it follows \eqref{asqh}.

\section{Discussion}
There are still many interesting issues concerning investigation of
the behavior of conformal moduli of domains under their geometric
transformations. Our next task includes investigating the validity of Conjecture\ref{conject_nasyrov} suggested by Prof. S.~R.~Nasyrov for the case of an sufficiently arbitrary nonsymmetric
quadrilateral.

\section*{FUNDING}
The work of the first author was performed under the development program of
the Volga Region Mathematical Center (agreement no. 075-02-2022-882).


\end{document}